\newcommand{\bburl}[1]{\textcolor{blue}{\url{#1}}}
\numberwithin{equation}{section}
\numberwithin{part}{section}
\newtheorem{theorem}{Theorem}
\numberwithin{theorem}{subsection} % important bit
\newtheorem{lemma}[theorem]{Lemma}
\theoremstyle{definition}
\theoremstyle{remark}
\newtheorem*{remark}{Remark}
\newcommand{\nc}{\newcommand}
\nc{\on}[1]{\operatorname{#1}}
\nc{\R}{\mathbb R}
\nc{\C}{\mathbb C}
\nc{\Q}{\mathbb Q}
\nc{\Z}{\mathbb Z}
\nc{\N}{\mathbb N}
\nc{\HH}{\mathbb H}
\nc{\DD}{\mathbb D}
\nc{\TT}{\mathbb T}
\nc{\EE}{\mathbb E}
\nc{\cT}{\mathcal T}
\nc{\cP}{\mathcal P}
\nc{\cM}{\mathcal M}
\nc{\cC}{\mathcal C}
\nc{\cB}{\mathcal B}
\nc{\cG}{\mathcal G}
\nc{\cA}{\mathcal A}
\nc{\cS}{\mathcal S}
\nc{\cF}{\mathcal F}
\nc{\cL}{\mathcal L}
\nc{\cR}{\mathcal R}
\nc{\frakI}{\mathfrak I}
\nc{\diam}{\operatorname{diam}}     % diameter of a set
\nc{\osc}{\operatorname{osc}}       % oscillation of a function
\nc{\inter}{\mathrm{o}}             % interior of a set
\nc{\close}[1]{\overline{#1}}       % closure of a set
\nc{\supp}{\operatorname{supp}}     % support of a function
\nc{\Prob}{\operatorname{Pr}}       % probability
\nc{\Symp}{\mathsf{Sp}}
\nc{\SpOrthO}{\mathsf{SO(odd)}}
\nc{\SpOrthE}{\mathsf{SO(even)}}
\nc{\Orth}{\mathsf O}
\nc{\Unit}{\mathsf U}
\nc{\UnitSp}{\mathsf{USp}}
\renewcommand{\epsilon}{\varepsilon}
\title
{
	\textsc{Determining optimal test functions for $2$-level densities}
}
\author[Bo\l dyriew]{El\.zbieta Bo\l dyriew}
\email{\textcolor{blue}{\href{mailto:ela.boldyriew@gmail.com}{ela.boldyriew@gmail.com}}}
\author[Chen]{Fangu Chen}
\email{\textcolor{blue}{\href{mailto:fangu@berkeley.edu}{fangu@berkeley.edu}}}
\address{Department of Mathematics, University of California Berkeley, Berkeley, CA 94720}
\author[Devlin]{Charles Devlin VI}
\email{\textcolor{blue}{\href{mailto:cpd6@uchicago.edu}{cpd6@uchicago.edu}}}
\address{Department of Mathematics, University of Chicago, Chicago, IL 60637}
\author[Miller]{Steven J. Miller}
\email{\textcolor{blue}{\href{mailto:sjm1@williams.edu}{sjm1@williams.edu}},  \textcolor{blue}{\href{Steven.Miller.MC.96@aya.yale.edu}{Steven.Miller.MC.96@aya.yale.edu}}}
\address{Department of Mathematics and Statistics, Williams College, Williamstown, MA 01267}
\author[Zhao]{Jason Zhao}
\email{\textcolor{blue}{\href{mailto:zhao.j@berkeley.edu}{zhao.j@berkeley.edu}}}
\address{Department of Mathematics, University of California, Berkeley, CA 94720}
\date{\today}
\subjclass[2010]{11Mxx (primary); 45Bxx (secondary)}
\keywords{Random matrix theory, $L$-functions, low-lying zeros, optimal test functions, Fredholm theory}
\thanks{This research was supported by NSF grant DMS1947438 and Williams College. The fourth listed author was supported by NSF grant DMS1561945. We also thank John Haviland, Fernando Trejos Su\'arez and Jiahui Yu for their comments on the problem during our many fruitful conversations, as well as the referee for their helpful suggestions. No data sets were used in this analysis; however, some Mathematica code was written to estimate certain integrals, and is available upon request by emailing S. J. Miller.}
\begin{document}

%\begin{titlepage}

\maketitle

\thispagestyle{empty}

\begin{abstract}
Katz and Sarnak conjectured a correspondence between the $n$-level density statistics of zeros from families of $L$-functions with eigenvalues from random matrix ensembles. In many cases the sums of smooth test functions, whose Fourier transforms are finitely supported, over scaled zeros in a family converge to an integral of the test function against a density $W_{n, G}$ depending on the symmetry $G$ of the family (unitary, symplectic or orthogonal). This integral bounds the average order of vanishing at the central point of the corresponding family of $L$-functions.

We can obtain better estimates on this vanishing in two ways. The first is to do more number theory, and prove results for larger $n$ and greater support; the second is to do functional analysis and obtain better test functions to minimize the resulting integrals. We pursue the latter here when $n=2$, minimizing 	\[ \frac{1}{\Phi(0, 0)} \int_{{\mathbb R}^2} W_{2,G} (x, y) \Phi(x, y) dx dy \] over test functions $\Phi : {\mathbb R}^2 \to [0, \infty)$ with compactly supported Fourier transform. We study a
restricted version of this optimization problem, imposing that our test functions take the form $\phi(x) \psi(y)$ for some fixed admissible $\psi(y)$ and
$\supp{\widehat \phi} \subseteq [-1, 1]$. Extending results from the 1-level case, namely the functional analytic arguments of Iwaniec, Luo and Sarnak and the differential equations method introduced by Freeman and Miller, we explicitly solve for the optimal $\phi$ for appropriately chosen fixed test function $\psi$. The solution allows us to deduce strong estimates for the proportion of newforms of rank $0$ or $2$ in the case of $\SpOrthE$, rank $1$ or $3$ in the case of $\SpOrthO$, and rank at most $2$ for $\Orth$, $\Symp$, and $\Unit$; our estimates are a significant strengthening of the best known estimates obtained with the $1$-level density. As a representative example, the previous best 1-level analysis yields a lower bound of 0.7839 for vanishing to order at most 2 for the $\SpOrthE$ family of cuspidal newforms, and our 2-level work improves this to 0.952694. We conclude by discussing further improvements on estimates by the method of iteration.
\end{abstract}

\tableofcontents
%\end{titlepage}

%%%%%%%%%%%%%%%%%%%%%%%%%%%%%%%%%%%%%%%%%%%%%%%%%%%%%%%%%%%%%%%%%%%%%%%%%%%%%%%%%%%%%%%%%%%%%%%%%%%%%%%%%%%%%%%%%%%%%%%%%%%%%%%%%%%%%%%%%%%%%%%%%%%%%%%%%%%%%%%%
%%%%%%%%%%%%%%%%%%%%%%%%%%%%%%%%%%%%%%%%%%%%%%%%%%%%%%%%%%%%%%%%%%%%%%%%%%%%%%%%%%%%%%%%%%%%%%%%%%%%%%%%%%%%%%%%%%%%%%%%%%%%%%%%%%%%%%%%%%%%%%%%%%%%%%%%%%%%%%%%
%%%%%%%%%%%%%%%%%%%%%%%%%%%%%%%%%%%%%%%%%%%%%%%%%%%%%%%%%%%%%%%%%%%%%%%%%%%%%%%%%%%%%%%%%%%%%%%%%%%%%%%%%%%%%%%%%%%%%%%%%%%%%%%%%%%%%%%%%%%%%%%%%%%%%%%%%%%%%%%%
\section{Introduction}

%%%%%%%%%%%%%%%%%%%%%%%%%%%%%%%%%%%%%%%%%%%%%%%%%%%%%%%%%%%%%%%%%%%%%%%
%%%%%%%%%%%%%%%%%%%%%%%%%%%%%%%%%%%%%%%%%%%%%%%%%%%%%%%%%%%%%%%%%%%%%%%
%%%%%%%%%%%%%%%%%%%%%%%%%%%%%%%%%%%%%%%%%%%%%%%%%%%%%%%%%%%%%%%%%%%%%%%
\subsection{Background}

The location and distribution of zeros of $L$-functions play a central role in numerous number theory problems. In many situations, the more one knows about their spacing, the stronger results one has. For example, the fact that the Riemann zeta function and Dirichlet $L$-functions do not vanish on the line ${\Re}(s) = 1$ yields the Prime Number Theorem, and that for a given modulus $q$ each arithmetic progression that can contain infinitely many primes does so, and to first order they all have the same number of primes up to $x$. If the Generalized Riemann Hypothesis for Dirichlet $L$-functions is true, we can improve the error terms in these counts to of size $x^{1/2 + \epsilon}$ for any $\epsilon$ (with a little work we can replace $x^\epsilon$ with a power of $\log x$); see for example  \cite{Da,IK}.

Next, if the positive imaginary parts of the nontrivial zeros of all Dirichlet $L$-functions with primitive Dirichlet characters are linearly independent over $\Q$ (the Grand Simplicity Hypothesis), then Rubinstein and Sarnak \cite{RubSa} proved Chebyshev's Bias, which quantifies how often up to $x$ each ordering of the possible residue classes modulo $q$ occurs. Assuming GRH, the non-trivial zeros of $L$-functions lie on the line ${\rm Re}(s) = 1/2$. There are many models that predict their behavior, especially Random Matrix Theory which states that families of $L$-functions as the conductors tend to infinity are modeled by ensembles of matrices (unitary, orthogonal and symplectic) with size tending to infinity; see in particular \cite{BFMT-B, Con, FM, Ha} (survey articles) and \cite{KatzSarnak, KatzSarnak2, KeSn1, KeSn2, KeSn3} (connections between number theory and random matrix theory), \cite{bogo, BogoKeat} (lower order terms), \cite{DHKMS1, DHKMS2} (lowest zero), \cite{CFKRS} (moments). Some of the most studied statistics are the $n$-level correlations and spacings between zeros \cite{Gallagher, Hej, Mon, Od1, Od2}, $n$-level densities \cite{FiM, GJMMPP, HR2, LM, Mil3, OS1, OS2, Rub, RudnickSarnak} (Dirichlet forms), \cite{Gao, Gu, HM, ILS, Mil4, RR, Ro} (cuspidal newforms), \cite{Mil1, Mil2, MilMo,  Yo1, Yo2} (elliptic curves), \cite{AAILMZ, AM} (Maass forms), \cite{FI, MilPe, Ya} (number fields), \cite{GK} (GL(3) families), and general \cite{CS, HR1} and compound families \cite{DM1, DM2, ShTe} (one could also look at moments and central values, and the models' predictions fare well here; for other approaches see \cite{CFZ1, CFZ2, GHK}). There are many reasons for interest in what happens on the critical line, such as the existence of many small gaps (relative to the average spacing) implying bounds for the class number problem \cite{CI}, or the famous Birch and Swinnerton-Dyer Conjecture equating the order of vanishing of elliptic curve $L$-functions at the central point to the rank of the Mordell-Weil group of rational points \cite{BSD1, BSD2, Gol}.

The last is the starting point for our investigations: given a family of $L$-functions ordered by conductor, what fraction with conductor at most a fixed size (tending to infinity) vanish to a given order at the central point? As remarked, for elliptic curves this is conjecturally related to the group of rational solutions. In this paper we use the $n$-level density of Katz-Sarnak \cite{KatzSarnak, KatzSarnak2}, applied to an even, non-negative Schwartz test function to obtain upper bounds. This approach was pioneered in \cite{ILS} (see in particular Appendix A), and extended further for the 1-level density in by Freeman \cite{FreemanThesis, FreemanMiller}.

There are many advantages to studying the $n$-level density in general, and the 2-level (which is our focus) in particular; we define these in the next subsection, and just state the applications. First, as Miller showed in his thesis \cite{Mil1}, while the three orthogonal groups have indistinguishable support for test functions whose Fourier transforms are supported in $(-1,1)$, the 2-level densities are distinguishable from each other (and the symplectic and unitary cases) for arbitrarily small support. Next, the $n$-level density yields results of the following form (see \cite[Corollary~1.9]{HM}): there are constants $c_n$ and $r_n$ such that as $N \to \infty$ through primes, the proportion of weight $k$ cuspidal newforms of level $N$ and sign $+1$ (resp. $-1$) having a zero of some given order $r \geq r_n$ at the central point is at most $c_n/r^n$; equivalently, the proportion with fewer than $r$ zeros at the central point is at least $1 - c_n / r^n$. Unfortunately as $n$ increases in practice the support where we can prove results decreases, and thus the constants $c_n$ grow with $n$ and the results are initially \emph{worse} for small $r$, though eventually the greater decay kicks in and better results than those from the 1-level are obtained. We summarize some recent progress, which examined consequences of using the optimal 2-level results from this paper for $n=4$, in Appendix \ref{sec:applimiller}; see \cite{LiM} for full details.

Below we provide details on just how results on the distribution of zeros near the central point (the $n$-level density) translate to bounds on the order of vanishing. The focus of our work is to provide the best possible upper bounds. This leads to a functional analysis problem, involving the optimization of integrals involving the even Schwartz test function.

%%%%%%%%%%%%%%%%%%%%%%%%%%%%%%%%%%%%%%%%%%%%%%%%%%%%%%%%%%%%%%%%%%%%%%%
%%%%%%%%%%%%%%%%%%%%%%%%%%%%%%%%%%%%%%%%%%%%%%%%%%%%%%%%%%%%%%%%%%%%%%%
%%%%%%%%%%%%%%%%%%%%%%%%%%%%%%%%%%%%%%%%%%%%%%%%%%%%%%%%%%%%%%%%%%%%%%%
\subsection{$n$-level density}

Let $\cF$ be a family of cuspidal newforms, and to each $f \in \cF$ we associate the $L$-function
    \[ L(s, f) \ = \  \sum_{n = 1}^\infty \frac{a_{n, f}}{n^s}. \]
We assume that the Riemann hypothesis holds for each $L(s, f)$ and for all Dirichlet $L$-functions, that is, we can enumerate the non-trivial zeros of $L(s, f)$ by
    \[ \rho^{(j)}_f \ = \ \frac12 + i \gamma_f^{(j)} \]
for $\gamma_f^{(j)} \in \R$ increasingly ordered and centered about zero (i.e. so that $-\gamma_f^{(j)} = \gamma_f^{(-j)}$ for all $j$). A standard argument (see \cite{IK}) shows that the number of zeros with $|\gamma_f^{(j)}|$ bounded by an absolute large constant is of order $\log c_f$ for some constant $c_f > 1$ known as the \emph{analytic conductor}.
 It is of interest to study the statistics of these ``low-lying'' zeros of $L(s, f)$, and to this end Katz and Sarnak \cite{KatzSarnak} introduced the \emph{$n$-level density}
\begin{equation}
    D_n (f; \Phi) \ := \  \sum_{\substack{j_1, \dots, j_n \\ j_i \neq \pm j_k}} \Phi \left( \frac{\log c_f}{2\pi} \gamma_f^{(j_1)}, \dots, \frac{\log c_f}{2\pi} \gamma_f^{(j_n)} \right)	\label{def:density}
\end{equation}
for \emph{test functions} $\Phi: \R^n \to \R$, which we take to be non-negative\footnote{We only need the test function to be non-negative if we wish to obtain bounds on the order of vanishing. As the zeros are symmetrically distributed on the critical line, there is no loss in taking $\Phi$ to be even.} even Schwartz class functions with compactly supported Fourier transform and $\Phi(0) > 0$. In practice the sum (\ref{def:density}) is impossible to evaluate asymptotically, since by choice of $\Phi$ it essentially captures only a bounded number of zeros. Instead we study averages over finite subfamilies $\cF (Q) := \{ f \in \cF : c_f \leq Q \}$, namely
\begin{equation}
    \EE (D_n (f; \Phi), Q) \ := \  \frac{1}{\# \cF (Q)} \sum_{f \in \cF (Q)} D(f; \Phi). \label{eq:averages}
\end{equation}
If $\cF$ is a complete family of cuspidal newforms in a spectral sense, there exists a distribution $W_{n, \cF}$ such that
\begin{equation}
    \lim_{Q \to \infty} \EE (D_n (f; \Phi), Q) \ = \   \frac{1}{\Phi(0)}\int_{\R^n} \Phi(x_1, \dots, x_n) W_{n, \cF} (x_1, \dots, x_n) dx_1 \cdots dx_n.
\end{equation}
Katz and Sarnak \cite{KatzSarnak, KatzSarnak2} conjectured that $W_{n, \cF}$ depends on a corresponding symmetry group $G(\cF)$, the scaling limit of one of the classical compact groups, so for the remainder we shall write $W_{n, G}$ in place of $W_{n, \cF}$.

Define
	\[ K(y) \ := \  \frac{\sin (\pi y)}{\pi y}, \qquad K_\epsilon (x, y) \ := \  K(x - y) + \epsilon K(x + y) \]
for $\epsilon = 0, \pm 1$. The corresponding $n$-level densities have the following distinct closed form determinant expansions \cite{HughesMiller, KatzSarnak}:
	\begin{align}
		W_{n, \SpOrthE} (x) 	
			&\ = \  \det \left( K_1 (x_i, x_j) \right)_{i, j \leq n}, \label{eq:nlevelSOeven} \\
		W_{n, \SpOrthO} (x)
			&\ = \  \det \left( K_{-1} (x_i, x_j) \right)_{i, j, \leq n} + \sum_{k  =  1}^n \delta (x_k) \det \left( K_{-1} (x_i, x_j) \right)_{i, j, \neq k},  \\
		W_{n, \Orth} (x)
			&\ = \  \frac12 W_{n, \SpOrthE} (x) + \frac12 W_{n, \SpOrthO} (x), \\
		W_{n, \Unit} (x)
			&\ = \  \det \left( K_0 (x_i, x_j) \right)_{i, j, \leq n}, \\
		W_{n, \Symp} (x)			
			&\ = \ \det \left( K_{-1} (x_i, x_j) \right)_{i, j, \leq n} \label{eq:nlevelSymp}.
	\end{align}

%%%%%%%%%%%%%%%%%%%%%%%%%%%%%%%%%%%%%%%%%%%%%%%%%%%%%%%%%%%%%%%%%%%%%%%
%%%%%%%%%%%%%%%%%%%%%%%%%%%%%%%%%%%%%%%%%%%%%%%%%%%%%%%%%%%%%%%%%%%%%%%
%%%%%%%%%%%%%%%%%%%%%%%%%%%%%%%%%%%%%%%%%%%%%%%%%%%%%%%%%%%%%%%%%%%%%%%
\subsection{Main result}

It is discussed in \cite{FreemanMiller} and \cite{ILS} that the 1-level density gives estimates on the average order of vanishing of $L$-functions at the central point in a family. Here we deal with the 2-level densities, which has the advantage of giving better estimates on higher vanishing at the central point. Let $\Prob (m)$ denote the limit as $Q \to \infty$ of the proportion of $f \in \cF(Q)$ with $r_f = m$, where $r_f$ is the order of the zero of $L(s, f)$ at $s = 1/2$. Considering (\ref{eq:averages}) for $n = 2$ and taking only terms with $\gamma_f^{(j_1)} = \gamma_f^{(j_2)} = 0$ gives the bound
\begin{align}
    4\sum_{m = 1}^\infty \left(m (m - 1) \Prob (2m) + m^2 \Prob(2m + 1) \right) \ \leq \ \frac{1}{\Phi(0,0)} \int_{\R^2} \Phi(x, y) W_{2, G} (x, y) dx dy. \label{eq:boundorder}
\end{align}
It is therefore of interest to choose $\Phi$ optimally to obtain the best bound on the left-hand side of (\ref{eq:boundorder}). Rather than minimizing over test functions of two variables, we instead fix a single variable test function $\psi$ and, imposing the restriction $\Phi(x, y) = \phi(x) \psi (y)$, minimize over single variable test functions $\phi$ with $\supp \widehat \phi \subseteq [-1, 1]$. Doing so greatly simplifies the calculations and still leads to good bounds; with more work this assumption can be relaxed, see for example \cite{LiM}. For our fixed $\psi$, we consider
	\begin{equation}
		\psi(y) \ = \  \left( \frac{\sin (\pi y)}{\pi y} \right)^2.\label{eq:fixedtest}
	\end{equation}	
It follows from Corollary A.2 in Appendix A of \cite{ILS} that the optimal test functions with Fourier transforms supported in $[-1, 1]$ for the 1-level densities are exactly scalar multiples of $\psi$, making it the natural choice of fixed test function. In addition, as we note at the start of Section \ref{sec:quadker}, the optimization problem for $\phi$ given our choice \eqref{eq:fixedtest} admits a particularly nice solution, another reason why \eqref{eq:fixedtest} is an ideal choice. Our main result is to solve this restricted optimization problem for $\psi$ as defined above.

%%%\fix{CAN WE COMMENT ON WHY THIS FIXED FUNCTION OVER THE OPTIMAL ONE FOUND IN ILS? IS IT BECAUSE THIS IS OPTIMAL IF THE SUPPORT IS AT MOST [-1,1]? IS IT BECAUSE IT LEADS TO EASIER, MORE AMENABLE COMPUTATIONS?}

%\fixed{Charley: I think this is chosen for ease of computation. It is not optimal. Do the others agree? (NOTE BY MILLER: THAT WAS MY RECOLLECTION)}

%\fixed{The solution gives strong but not optimal bounds for \eqref{eq:boundorder}, and we discuss in Section 3 a numerical method to sharpen the bound further.}

\begin{theorem}
	Let $\psi$ be as in (\ref{eq:fixedtest}). For each of the classical compact groups $G = \SpOrthE$, $\SpOrthO$, $\Unit$, $\Orth$, and $\Symp$, there exists an optimal square integrable function $g_{G, \psi} \in L^2 [-1/2, 1/2]$ and constant $c_{G, \psi}$ such that
		\begin{equation}
			\frac{c_{G, \psi}}{\int_{-1/2}^{1/2} g(x) dx} \ = \  \inf_\phi  \frac{1}{\phi(0) \psi(0)} \int_{\R^2} \phi(x) \psi(y) W_{2, G} (x, y) dx dy, \label{eq:thmidentity} \end{equation}	
	where the infimum is taken over test functions $\phi$ with Fourier transform satisfying $\supp \widehat \phi \subseteq [-1, 1]$. The constants and optimal square integrable functions are given by
		\begin{equation}
			c_{G, \psi} \ = \
				\begin{cases}
					\frac12, 		&\text{if } G = \Symp,\\
					1, 				&\text{if } G \ = \  \Unit, \\
					\frac32,			&\text{if } G \ = \  \SpOrthE, \SpOrthO, \Orth,
				\end{cases}
		\end{equation}
	and
		\begin{align}
			g_{\SpOrthE, \psi} (x)
				&\ = \  \frac{216 \cos(4x/\sqrt 3) + 36 \sqrt 3 \sin(2/\sqrt 3)}{162 \cos(2/\sqrt 3) - 5 \sqrt{3} \sin(2/\sqrt 3)}, \\
			g_{\SpOrthO, \psi} (x) &\ = \  \frac{8 \cos (4 x/\sqrt{3})+12 \sqrt{3} \sin(2/\sqrt{3})}{11 \sqrt{3} \sin (2/\sqrt{3})+2 \cos(2/\sqrt{3})}, \\
			g_{\Unit, \psi} (x)
				&\ = \  \frac{6 \cos (2x) + 6 \sin(1)}{3 \cos(1) + 4 \sin(1)}, \\
			g_{\Orth, \psi} (x)
				&\ = \   \frac{36\cos(4x/\sqrt{3})+18\sqrt{3} \sin(2/\sqrt{3})}{18 \cos(2/\sqrt{3}) + 13 \sqrt{3} \sin(2/\sqrt{3})}, \\
			g_{\Symp, \psi} (x)
				&\ = \  \frac{8 \cos (4x) + 12 \sin(2)}{2 \cos (2) + 3 \sin(2)}.
		\end{align}	
	Additionally, the optimal test function $\phi_{G, \psi}$ realizing the infimum in (\ref{eq:thmidentity}) satisfies $\widehat{\phi_{G, \psi}} = g_{G, \psi} * g_{G, \psi}$.
		\label{thm:optimalgpsi}
\end{theorem}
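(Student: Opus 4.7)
The plan is to recast the infimum as a constrained quadratic minimization on $L^2[-1/2, 1/2]$, derive a Fredholm integral equation via Lagrange multipliers, and solve it by exploiting the piecewise-polynomial nature of the kernel to extract an ODE, extending the approach of Iwaniec--Luo--Sarnak \cite{ILS} and Freeman--Miller \cite{FreemanMiller}. The initial step is a Fej\'er--Riesz type parametrization: any admissible nonnegative even $\phi$ with $\supp\widehat\phi \subseteq [-1,1]$ can be written as $\widehat\phi = g * g$ for some real even $g \in L^2[-1/2, 1/2]$, whence $\phi(0) = \left(\int_{-1/2}^{1/2} g\right)^2$ and $\psi(0) = 1$. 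Integrating out $y$ first, set $F_G(x) := \int_\R \psi(y) W_{2, G}(x, y)\, dy$; since $\widehat\psi = \Lambda := (1 - |\cdot|)_+$ is the triangle function and each translate $K(x \pm y)$ has Fourier transform in $y$ equal to $e^{\mp 2\pi i x\xi}\mathbf{1}_{[-1/2,1/2]}(\xi)$, one computes $\widehat{F_G}$ explicitly as a piecewise polynomial on $[-1,1]$ with possible delta contributions at $\xi = 0$ (from constant terms like $K_\epsilon(x,x)K_\epsilon(y,y)$ and, for $\SpOrthO$, from the additional delta terms in $W_{2,\SpOrthO}$).

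Applying Plancherel once more to the outer $x$-integral converts the numerator into the quadratic form
\[
Q_G(g) := \iint_{[-1/2, 1/2]^2} g(\alpha) g(\beta)\, \widehat{F_G}(\alpha + \beta)\, d\alpha\, d\beta,
\]
and the problem reduces to minimizing $Q_G(g) / \left(\int g\right)^2$. The first-order Lagrange optimality condition is the Fredholm integral equation of the first kind with displacement kernel,
\[
\int_{-1/2}^{1/2} \widehat{F_G}(\alpha + \beta)\, g(\beta)\, d\beta = c_{G, \psi} \qquad \text{for all } \alpha \in [-1/2, 1/2],
\]
in which $c_{G, \psi}$ is a Lagrange multiplier. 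Pairing both sides with $g$ and integrating gives that the infimum is exactly $c_{G,\psi}/\int g$, matching the form asserted in the statement.

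To solve the integral equation I would differentiate twice in $\alpha$, following the differential equations method of Freeman--Miller. Because $\widehat{F_G}$ is piecewise polynomial of low degree on $[-1,1]$, its distributional second derivative splits into a constant, a multiple of $\widehat{F_G}$ itself, and delta contributions at the breakpoints $\xi = \pm 1$; crucially, $\alpha + \beta = \pm 1$ only at the corner points $(\pm 1/2, \pm 1/2)$ of the square, so the delta terms do not contribute on the open square and one obtains a second-order ODE of the form
\[
g''(\alpha) + \omega_G^2\, g(\alpha) \;=\; \kappa_G \int_{-1/2}^{1/2} g(\beta)\, d\beta,
\]
whose even, bounded solutions are $g(\alpha) = A \cos(\omega_G \alpha) + B$. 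The three distinct frequencies $\omega_G \in \{2, 4, 4/\sqrt 3\}$ appearing in the theorem arise naturally from the coefficients in the second derivative of $\widehat{F_G}$ for each symmetry type. Substituting this ansatz back into the undifferentiated integral equation (say at $\alpha = 0$) pins down $A$, $B$, and $c_{G, \psi}$ by a short linear calculation, producing the stated formulas.

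The principal technical obstacle is the meticulous explicit computation of $\widehat{F_G}$ for each of the five symmetry types: each of $W_{2, \Symp}, W_{2, \SpOrthE}, W_{2, \SpOrthO}, W_{2, \Orth}, W_{2, \Unit}$ produces a different combination of terms of the form $K(2x)$, $K(2y)$, $K(2x)K(2y)$, $K(x \pm y)^2$, and $K(x-y)K(x+y)$ (and, for $\SpOrthO$, the additional delta-at-origin terms that contribute extra $\phi(0)$ pieces), and each of these must be Fourier-analyzed carefully to identify the coefficients $\omega_G^2$ and $\kappa_G$ correctly. A secondary issue is confirming that the unique stationary point so identified really is the global infimum: I would address this by verifying positive semi-definiteness of $Q_G$ on the orthogonal complement of constants in $L^2[-1/2, 1/2]$, so that $Q_G/\left(\int g\right)^2$ is a convex functional of $g$ after the Fej\'er--Riesz parametrization.
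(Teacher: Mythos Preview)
Your proposal is correct and follows essentially the same route as the paper: a Paley--Wiener/Fej\'er--Riesz parametrization $\widehat\phi = g*\breve g$, a Plancherel reduction to a quadratic form with weight $\widehat{F_G} = \tilde V_{G,\psi} = c_{G,\psi}\delta + \tilde m_{G,\psi}$, the resulting Fredholm equation, and the Freeman--Miller differentiation trick exploiting that the kernel is a quadratic polynomial in $|x|$. The only cosmetic differences are that the paper separates off the $\delta$-mass explicitly (so the equation is visibly of the \emph{second} kind, $(I+K_{G,\psi})g=1$, with $c_{G,\psi}$ read off directly from the weight rather than introduced as a Lagrange multiplier), invokes Proposition~A.1 of \cite{ILS} in place of your Lagrange/convexity argument, and differentiates one extra time to obtain the constant-coefficient ODE $g'''+2bg'=0$ before back-substituting; these lead to the same trigonometric ansatz and the same linear system for the constants.
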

The test function $\psi$ is used in Section 1 of \cite{ILS} to obtain naive bounds on the average order of vanishing. Similarly, we can compute naive bounds for the 2-level densities by taking $\Phi (x, y) = \psi(x) \psi(y)$. Table \ref{tab:bounds} shows that the bounds derived from Theorem \ref{thm:optimalgpsi} significantly improve the naive bounds.

\begin{table}[h]
	\begin{center}
		\begin{tabular}{|l||c|c|}
			\hline
			\textbf{Family} & \textbf{Naive bounds} & \textbf{Closed form of (\ref{eq:thmidentity})} \\ \hline
			$\SpOrthE$ & $\frac{5}{12} \approx 0.416666$ & $\frac{1}{96} \left(54 \sqrt{3} \cot \left(\frac{2}{\sqrt{3}}\right)-5\right) \approx 0.378448$ \\\hline
			$\SpOrthO$ & $\frac{13}{12} \approx 1.083333$ & $\frac{1}{32} \left(33+2 \sqrt{3} \cot \left(\frac{2}{\sqrt{3}}\right)\right) \approx 1.07909$ \\
			\hline
			$\Orth$ & $\frac34 \approx 0.75$ & $\frac{1}{24} \left(13+6 \sqrt{3} \cot \left(\frac{2}{\sqrt{3}}\right)\right)\approx 0.733014 $\\\hline
			$\Unit$ & $\frac12 \approx 0.5$ & $\frac{1}{12} (4+3 \cot (1)) \approx 0.493856$\\\hline
			$\Symp$ & $\frac{1}{12} \approx 0.083333$ & $\frac{1}{32} (3+2 \cot (2)) \approx 0.0651464$\\\hline
		\end{tabular}
		\caption{Comparing naive bounds taking $\phi = \psi$ with the optimal value over support in $[-1, 1]$ from (\ref{eq:thmidentity}) for each of the classical compact groups.} \label{tab:bounds}
	\end{center}
\end{table}

%%%\fix{THIS IS A GOOD TABLE BUT WE NEED A BIT MORE. WHAT IF WE USE THE OPTIMAL TEST FUNCTION FOUND IN ILS? OR IS THIS COVERED IN THE NEXT PARAGRAPHS?}

%%%%%%%%%%%%%%%%%%%%%%%%%%%%%%%%%%%%%%%%%%%%%%%%%%%%%%%%%%%%%%%%%%%%%%%
%%%%%%%%%%%%%%%%%%%%%%%%%%%%%%%%%%%%%%%%%%%%%%%%%%%%%%%%%%%%%%%%%%%%%%%
%%%%%%%%%%%%%%%%%%%%%%%%%%%%%%%%%%%%%%%%%%%%%%%%%%%%%%%%%%%%%%%%%%%%%%%

\subsection{Applications to vanishing at the central point}
If we instead consider (\ref{eq:averages}) for $n = 1$ and again take only the terms at the central point, we obtain the $1$-level analogue to (\ref{eq:boundorder}),
\begin{equation}
	\sum_{m = 1}^\infty m \Prob(m) \ \leq \ \frac{1}{\phi(0)} \int_\R \phi(x) W_{1, G} (x) dx dy \label{eq:boundorder1}.
\end{equation}
Comparing with (\ref{eq:boundorder}), we see that the $2$-level densities gives improvements on estimates for the average order of vanishing $m$ by a factor of $m$. For example, consider the orthogonal groups $\SpOrthE$, where the order at the central point is always even, and $\SpOrthO$, where the order is always odd. It was shown in Appendix A of \cite{ILS} that the optimal values for the right-hand side of (\ref{eq:boundorder1}) for test functions with Fourier transforms supported in $[-2, 2]$ are
\begin{equation}
	\inf_\phi \frac{1}{\phi(0)} \int_\R \phi(x) W_{1, G} (x) dx
		\ = \
		\begin{cases}
			\frac18 \left(3 + \cot\left(\frac14\right)\right) \approx 0.8645, 		&\text{if } G = \SpOrthE, \\
			\frac18 \left(5 + \cot\left(\frac14\right)\right) \approx 1.1145, 		&\text{if } G = \SpOrthO.
		\end{cases}	\label{eq:bounds1}
\end{equation}
Note that our bounds in Table \ref{tab:bounds} are better on the same order of magnitude as those above. Thus for $\SpOrthE$ and $\SpOrthO$ we immediately see improvements on upper bounds for order two and above. For higher orders such as $m = 2020$  or $2021$, we see significant improvements. We obtain the $1$-level bounds applying (\ref{eq:bounds1}) to (\ref{eq:boundorder1}), yielding
\begin{align}
	\Prob(2020) &\ \lessapprox \ 4.280 \cdot 10^{-4} \qquad \text{if } G = \SpOrthE,\\
	\Prob(2021) &\ \lessapprox \ 5.515 \cdot 10^{-4} \qquad \text{if } G = \SpOrthO.
\end{align}
The $2$-level bounds are obtained using the optimal values listed in Table \ref{tab:bounds} and (\ref{eq:boundorder}), yielding
\begin{align}
	\Prob(2020) &\ \lessapprox \ 9.284\cdot 10^{-8} \qquad \text{if } G = \SpOrthE,\\
	\Prob(2021) &\ \lessapprox \ 2.645 \cdot 10^{-7} \qquad \text{if } G = \SpOrthO.
\end{align}
Moreover, subtracting either (\ref{eq:boundorder}) or (\ref{eq:boundorder1}) from $\sum_m \Prob(m) = 1$ gives lower bounds on low orders of vanishing. In the case of the groups $\SpOrthE$ and $\SpOrthO$, we can also use the parity of the order for marginally better results. For $\SpOrthE$, the $1$-level and $2$-level lower bounds are respectively
\begin{equation}
	\sum_{m = 0}^k \Prob(2m) \ \geq \  1 -  \frac{1}{(2k + 2)\phi(0)} \int_\R \phi(x) W_{1, \SpOrthE} (x) dx, \label{eq:lower1even}
\end{equation}
and
\begin{equation}
	\sum_{m = 0}^k \Prob(2m) \ \geq \  1 -  \frac{1}{4 k(k + 1)\Phi(0, 0)} \int_{\R^2} \Phi(x, y) W_{2, \SpOrthE} (x, y) dx dy, \label{eq:lower2even}
\end{equation}
For $\SpOrthO$, the $1$-level and $2$-level lower bounds are respectively
\begin{equation}
	\sum_{m = 0}^k \Prob(2m + 1) \ \geq \  1 -  \frac{1}{(2k + 3)\phi(0)} \int_\R \phi(x) W_{1, \SpOrthO} (x) dx, \label{eq:lower1odd}
\end{equation}
and
\begin{equation}
	\sum_{m = 0}^k \Prob(2m + 1) \ \geq \  1 -  \frac{1}{4 (k + 1)^2\Phi(0, 0)} \int_{\R^2} \Phi(x, y) W_{2, \SpOrthO} (x, y) dx dy. \label{eq:lower2odd}
\end{equation}
For example, consider $k = 1$. Using the 1-level estimates (\ref{eq:lower1even}) and (\ref{eq:lower1odd}) with the values from (\ref{eq:bounds1}) yields the lower bounds
\begin{align}
	\Prob(0) + \Prob(2) &\gtrapprox 0.7839 \qquad \text{if } G = \SpOrthE,\\
	\Prob(1) + \Prob(3) &\gtrapprox 0.7771 \qquad \text{if } G = \SpOrthO.
\end{align}
On the other hand, using the 2-level estimates (\ref{eq:lower2even}) and (\ref{eq:lower2odd}) and the optimal values from Table \ref{tab:bounds} yields the bounds
%\begin{align}
%	\Prob(0) + \Prob(2) &\gtrapprox 0.9842 \qquad \text{if } G = \SpOrthE \label{eq:lower2evenbound},\\
%	\Prob(1) + \Prob(3) &\gtrapprox 0.9701 \qquad \text{if } G = \SpOrthO.\label{eq:lower2oddbound}
%\end{align}
\begin{align}
    \Prob(0) + \Prob(2) &\gtrapprox 0.952694 \qquad \text{if } G = \SpOrthE, \label{eq:lower2evenbound} \\
    \Prob(1) + \Prob(3) &\gtrapprox 0.932556 \qquad \text{if } G = \SpOrthO. \label{eq:lower2oddbound}
\end{align}
Similarly, $2$-level estimates give
\begin{align}
    \Prob(0) + \Prob(1) + \Prob(2) &\gtrapprox 0.816746 \qquad \text{if } G = \Orth, \label{eq:lower2orthobound} \\
    \Prob(0) + \Prob(1) + \Prob(2) &\gtrapprox 0.876536 \qquad \text{if } G = \Unit, \label{eq:lower2unitbound} \\
    \Prob(0) + \Prob(1) + \Prob(2) &\gtrapprox 0.983713 \qquad \text{if } G = \Symp. \label{eq:lower2unitspbound}
\end{align}

We see that using the $2$-level estimates also provides significant improvements on the lower bound for low average order of vanishing at the central point compared to the $1$-level estimates from \cite{ILS} despite considering smaller support. A natural question to examine is how large of a support needs to be considered for the $1$-level estimates to provide better bounds on these low orders of vanishing than our $2$-level estimates for fixed support in $[-1, 1]$. Currently the largest support where the optimal test functions for the $1$-level densities are known is $[-3, 3]$, shown in \cite{FreemanMiller, FreemanThesis}. The corresponding optimal values follow from Corollary 1.2 of \cite{FreemanMiller} taking $\sigma = 1.5$, which yields
\begin{equation}
	\inf_\phi \frac{1}{\phi(0)} \int_\R \phi(x) W_{1, G} (x) dx
		\approx
		\begin{cases}
			0.60363, 		&\text{if } G = \SpOrthE, \\
			1.04304, 		&\text{if } G = \SpOrthO,
		\end{cases}	\label{eq:jessebounds}
\end{equation}
where the infimum is taken over test functions with Fourier transforms supported in $[-3, 3]$. Applying the optimal values from (\ref{eq:jessebounds}) to the $1$-level estimates (\ref{eq:lower1even}) and (\ref{eq:lower1odd}), we obtain
\begin{align}
	\Prob(0) + \Prob(2) &\gtrapprox 0.84909 \qquad \text{if } G = \SpOrthE,\\
	\Prob(1) + \Prob(3) &\gtrapprox 0.79139 \qquad \text{if } G = \SpOrthO.
\end{align}
These $1$-level bounds are still worse than our $2$-level bounds from (\ref{eq:lower2evenbound}) and (\ref{eq:lower2oddbound}), so it would seem that we need much larger support for the $1$-level estimate to surpass the $2$-level estimates for fixed support at these low orders of vanishing. Further, to date there are no families where the 1-level density has been done for support as large as $[-2, 2]$; the best we have is up to $[-2,
2]$ (or a little more for cuspidal newforms if we assume Hypothesis S from
\cite{ILS}); however, we do have the 2-level up to $[-1,1]$.
This motivates further research into deriving optimal test functions for higher level densities and small support.

%%%%%%%%%%%%%%%%%%%%%%%%%%%%%%%%%%%%%%%%%%%%%%%%%%%%%%%%%%%%%%%%%%%%%%%%%%%%%%%%%%%%%%%%%%%%%%%%%%%%%%%%%%%%%%%%%%%%%%%%%%%%%%%%%%%%%%%%%%%%%%%%%%%%%%%%%%%%%%%%
%%%%%%%%%%%%%%%%%%%%%%%%%%%%%%%%%%%%%%%%%%%%%%%%%%%%%%%%%%%%%%%%%%%%%%%%%%%%%%%%%%%%%%%%%%%%%%%%%%%%%%%%%%%%%%%%%%%%%%%%%%%%%%%%%%%%%%%%%%%%%%%%%%%%%%%%%%%%%%%%
%%%%%%%%%%%%%%%%%%%%%%%%%%%%%%%%%%%%%%%%%%%%%%%%%%%%%%%%%%%%%%%%%%%%%%%%%%%%%%%%%%%%%%%%%%%%%%%%%%%%%%%%%%%%%%%%%%%%%%%%%%%%%%%%%%%%%%%%%%%%%%%%%%%%%%%%%%%%%%%%
\section{Proof of Theorem \ref{thm:optimalgpsi}}

%%%%%%%%%%%%%%%%%%%%%%%%%%%%%%%%%%%%%%%%%%%%%%%%%%%%%%%%%%%%%%%%%%%%%%%
%%%%%%%%%%%%%%%%%%%%%%%%%%%%%%%%%%%%%%%%%%%%%%%%%%%%%%%%%%%%%%%%%%%%%%%
%%%%%%%%%%%%%%%%%%%%%%%%%%%%%%%%%%%%%%%%%%%%%%%%%%%%%%%%%%%%%%%%%%%%%%%
\subsection{Functional analytic setup}

Prior literature on the optimization problem, such as \cite{FreemanMiller} and \cite{FreemanThesis}, dealt with the 1-level densities following the functional analytic approach outlined in Appendix A of \cite{ILS}. We want to impose restrictions so that such an approach is amenable to the 2-level density optimization problem. To that end, we consider the optimization over test functions of the form $\Phi(x, y) = \phi(x) \psi(y)$ for fixed admissible $\psi(y)$ with $\supp \widehat\psi \subseteq [-1, 1]$. This reduces the problem to one analogous to the 1-level density, where we are optimizing over one-variable test functions. Explicitly, we want to compute
\begin{equation}
    \inf_\phi \frac{1}{\phi(0) \psi(0)} \int_{\R^2} \phi(x) \psi(y) W_{2, G} (x, y) dx dy \label{eq:2minimize}
\end{equation}
where the infimum is taken over test functions $\phi : \R \to \R$ with $\supp \widehat \phi \subseteq [-1, 1]$. Attacking the optimization problem via the Fourier transform is more promising than a direct approach. On the transform side, assumptions on the support reduce an integration over the entire plane $\R^2$ to an integration over the square $[-1, 1] \times [-1, 1]$, and the 2-level densities themselves are unwieldy to work with, while their Fourier transforms are sums of linear polynomials in $|x|$ and Dirac delta functions. Moreover, Gallagher \cite{Gallagher} noted that a correspondence exists between admissible test functions $\phi$ and square-integrable functions. Namely, it follows by the Ahiezer and Paley-Wiener theorems that $\phi$ is a test function with $\supp \widehat \phi \subseteq [-1, 1]$ if and only if there exists $f \in L^2 [-1/2, 1/2]$ such that
	\begin{equation}
		\widehat \phi (x) \ = \  (f * \breve{f} ) (x),
	\end{equation}	
where
	\begin{equation}
		\breve{f} (x) \ = \  \close{f( - x)}.
	\end{equation}	
Thus rather than minimizing a functional over test functions, we can instead view the problem as minimizing a functional $\tilde R_{G, \psi}$ on a subset of $L^2 [-1/2, 1/2]$, defined by
	\begin{equation}
		\tilde R_{G, \psi} (f) \ := \  \frac{1}{\phi(0) \psi(0)} \int_{\R^2} \phi(x) \psi(y) W_{2, G} (x, y) dx dy. \label{eq:originalfunctional}
	\end{equation}
This perspective gives access to more functional analytic tools, namely Fredholm theory. Motivated by our earlier remarks on the Fourier transform, we apply the Plancharel theorem to write
	\begin{equation}
		\tilde R_{G, \psi} (f) \ = \  \frac{1}{\phi(0)} \int_{-1}^1 \widehat \phi (x) \tilde V_{G, \psi} (x) dx,
	\end{equation}	
where we have a weight function $\tilde V_{G, \psi}$ given by
	\begin{equation}
		\tilde V_{G, \psi} (x) \ = \  \frac{1}{\psi(0)}\int_{-1}^1 \widehat \psi (y) \widehat{W_{2, G}} (x, y) dy. \label{eq:vtilde}
	\end{equation}	
In the 1-level case, the role of the weight function is played by the Fourier transforms of the 1-level distributions (\ref{eq:nlevelSOeven}) - (\ref{eq:nlevelSymp}), which take the form $\delta + m_G$. Analogously, following calculations due to Hughes and Miller \cite{HughesMiller}, for each of the classical compact groups the weight function (\ref{eq:vtilde}) takes the form
	\begin{equation}
		\tilde V_{G, \psi} (x) \ = \   c_{G, \psi} \delta(x) + \tilde m_{G, \psi} (x) \mathbb 1_{[-1, 1]} (x) \label{eq:vtilde2}
	\end{equation}	
for constants $c_{G, \psi} \in \R$ and kernel $\tilde m_{G, \psi} \in L^2 [-1, 1]$ depending on our choice of initial test function $\psi$ and the classical compact group $G$, namely
	\begin{equation}
		c_{G, \psi} \ = \  \frac{\widehat \psi (0)}{\psi (0)} +
			\begin{cases}
				- \frac12, 		&\text{if } G = \Symp,\\
				0, 				&\text{if } G = \Unit, \\
				\frac12,			&\text{if } G = \SpOrthE, \SpOrthO, \Orth,
			\end{cases}
		\end{equation}
and
\begin{align}
	\tilde m_{\SpOrthE, \psi} (x)
	    &\ = \ \frac{1}{2}\left(\frac{\widehat{\psi}(0)}{\psi(0)} + \frac{1}{2}\right) + 2 \frac{\widehat{\psi}(x)}{\psi(0)}(|x|-1)-\frac{\int_{|x|-1}^{1-|x|}\widehat{\psi}(y) dy}{\psi(0)},\\
	\tilde  m_{\SpOrthO, \psi} (x)
        &\ = \  \frac{1}{2}\left(\frac{\widehat{\psi}(0)}{\psi(0)} - \frac{3}{2}\right) + 2 \frac{\widehat{\psi}(x)}{\psi(0)}(|x|-1)+\frac{\int_{|x|-1}^{1-|x|}\widehat{\psi}(y) dy}{\psi(0)},\\
	\tilde m_{\Orth, \Psi}(x)
		&\ = \  \frac{1}{2}\left(\frac{\widehat{\psi}(0)}{\psi(0)} - \frac{1}{2}\right) + 2 \frac{\widehat{\psi}(x)}{\psi(0)}(|x|-1),\\
	\tilde m_{\Unit, \psi} (x)
	    &\ = \ 	\frac{\widehat{\psi}(x)}{\psi(0)}(|x|-1),\\
	\tilde m_{\Symp, \psi} (x)
	    &\ = \ -\frac{1}{2}\left(\frac{\widehat{\psi}(0)}{\psi(0)} - \frac{1}{2}\right) + 2 \frac{\widehat{\psi}(x)}{\psi(0)}(|x|-1)+\frac{\int_{|x|-1}^{1-|x|}\widehat{\psi}(y) dy}{\psi(0)}.		
\end{align}
To complete the analogy with the 1-level case, we consider the normalized functional, weight function and kernel:
	\begin{align}
		R_{G, \psi} (f) &\ := \  \frac{\tilde R_{G, \psi}(f)}{c_{G, \psi}}, \label{eq:quadraticform} \\
		V_{G, \psi} (x) &\ := \  \frac{\tilde V_{G, \psi} (x)}{c_{G, \psi}}, \\
		m_{G, \psi} (x) &\ := \  \frac{\tilde m_{G, \psi} (x)}{c_{G, \psi}}.
	\end{align}	
Consider the compact self-adjoint operator $K_{G, \psi} :L^2 [-1/2, 1/2] \to L^2 [-1/2, 1/2]$ (see Appendix A of \cite{ILS}) defined by
	\begin{equation}
		(K_{G, \psi} f) (x) \ := \  \int_{-1/2}^{1/2} m_{G, \psi} (x - y) f(y) dy.
	\end{equation}	
Then the optimization problem (\ref{eq:2minimize}) is equivalent to the minimization of the quadratic form
    \begin{equation}
    	R_{G, \psi} (f) \ = \  \frac{\langle (I + K_{G, \psi})f, f \rangle}{|\langle f, 1 \rangle|^2}
    \end{equation}	
subject to the linear constraint $\langle f, 1 \rangle \neq 0$,{where
    \begin{equation}
        \langle f, g \rangle \ := \ \int_{-1/2}^{1/2} f(y) g(y) \, \mathrm{d} y.
    \end{equation}
We know $R_{G, \psi} \geq 0$, which implies $I + K_{G, \psi}$ is positive definite. If $I + K_{G, \psi}$ is non-singular, that is, $-1$ is not an eigenvalue of $K_{G, \psi}$, then it follows from the Fredholm alternative that there exists a unique $g_{G, \psi} \in L^2 [-1/2, 1/2]$ satisfying the integral equation
	\begin{equation}
			(I + K_{G, \psi}) (g_{G, \psi}) (x) \ = \  g_{G, \psi} (x) + \int_{-1/2}^{1/2} m_{G, \psi} (x - y) g_{G, \psi} (y) dy \ = \  1 \label{eq:fredholm}
	\end{equation}	
for $x \in [-1/2, 1/2]$, and by Proposition A.1 of \cite{ILS},
	\begin{equation}
		\inf_f R_{G, \psi} (f) \ = \  \frac{1}{\langle 1, g_{G, \psi} \rangle}. \label{eq:ILSminimum}
	\end{equation}	
Observe (\ref{eq:thmidentity})	in Theorem \ref{thm:optimalgpsi} follows directly from (\ref{eq:originalfunctional}), (\ref{eq:quadraticform}) and (\ref{eq:ILSminimum}). It remains to check the following lemma.

\begin{lemma}
The operator $I + K_{G, \psi}$ is non-singular, that is, $-1$ is not an eigenvalue of $K_{G, \psi}$. \label{lem:nonsingular}
\end{lemma}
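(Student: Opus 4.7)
The plan is to establish the stronger statement that $\langle (I + K_{G, \psi}) f, f\rangle > 0$ for every nonzero $f \in L^2[-1/2, 1/2]$; this rules out $-1$ from the spectrum of $K_{G, \psi}$ (equivalently, $I + K_{G, \psi}$ is injective, hence bijective by the Fredholm alternative applied to the compact operator $K_{G, \psi}$). To do this, I would reverse the Plancherel manipulations that produced the quadratic form and work from the identity
\[
c_{G, \psi} \langle (I + K_{G, \psi}) f, f \rangle \;=\; \int_{\mathbb{R}^2} \phi_f(x)\, \psi(y)\, W_{2, G}(x, y)\, dx\, dy,
\]
where $\phi_f$ is the Schwartz test function associated to $f$ via the Ahiezer--Paley-Wiener correspondence $\widehat{\phi_f} = f * \breve f$. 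This identity follows by unwinding the definitions of $\tilde R_{G,\psi}$, $\tilde V_{G,\psi}$, and $R_{G,\psi} = \tilde R_{G,\psi}/c_{G,\psi}$, using $\phi_f(0) = |\langle f, 1\rangle|^2$ and $\psi(0) = 1$, and it translates the spectral question into strict positivity of an honest integral.

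Each factor in the integrand is pointwise nonnegative: $\psi$ is a squared sinc; $\phi_f \geq 0$ by Bochner's theorem, since $\widehat{\phi_f} = f * \breve f$ is positive definite; and $W_{2,G}$ is nonnegative as a distribution, because its continuous part is a determinant of a positive semi-definite reproducing kernel of the form $K_\epsilon$, and the $\delta(x_k)$ summands appearing in the $\SpOrthO$ and $\Orth$ formulas carry nonnegative coefficients (determinants of positive semi-definite $1 \times 1$ minors of the same kernel). To upgrade this to a strict inequality when $f \neq 0$, note that $\widehat{\phi_f}$ is compactly supported in $[-1,1]$, so Paley--Wiener realizes $\phi_f$ as the restriction to $\R$ of an entire function of exponential type, while $\widehat{\phi_f}(0) = \|f\|_2^2 > 0$ guarantees $\phi_f \not\equiv 0$; its real zero set is therefore discrete. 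Combined with $\psi > 0$ off the integers and the continuous part of $W_{2,G}$ being strictly positive on a nonempty open subset of $\R^2$, the integrand is strictly positive on a set of positive measure, so the integral is strictly positive, and dividing by $c_{G,\psi} > 0$ completes the argument.

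The only step that needs genuine care is the claim that the continuous part of $W_{2,G}$ is strictly positive on a nonempty open set, which must be checked for each of the five symmetry types. For $\Unit$ this is immediate from $W_{2,\Unit}(x,y) = 1 - \mathrm{sinc}^2(x-y) > 0$ off the diagonal, and the remaining cases are handled analogously by inspecting the relevant $2 \times 2$ determinant at an integer lattice point where most sinc terms vanish. More abstractly, each such determinant is real-analytic and not identically zero, so its zero locus is a proper real-analytic subvariety of $\R^2$ with empty interior, and the required open set exists for free.
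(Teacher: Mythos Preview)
Your proof is correct, but it follows a genuinely different route from the paper's. The paper argues directly on the homogeneous equation $(I+K_{G,\psi})f=0$: it first uses uniform continuity of the kernel $m_{G,\psi}$ to bootstrap any $L^2$ solution $f$ to a continuous one, and then appeals to the Liouville--Neumann series to conclude that the only continuous solution is $f\equiv 0$. Your argument instead sharpens the paper's earlier observation that $R_{G,\psi}\ge 0$ (equivalently, $I+K_{G,\psi}$ is positive semidefinite) to strict positive definiteness, by unwinding the Plancherel identity back to the integral $\int \phi_f\,\psi\,W_{2,G}$ and exploiting pointwise nonnegativity of each factor together with real-analyticity to get a strictly positive contribution on a set of positive measure.

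What each approach buys: the paper's argument is self-contained at the level of the integral operator and uses nothing about the random-matrix origin of the kernel, so it applies verbatim to any uniformly continuous $m$ (as the paper later remarks when treating general quadratic kernels). Your argument is more conceptual --- it explains \emph{why} non-singularity holds, namely because the $2$-level densities $W_{2,G}$ are genuine nonnegative correlation functions --- and it sidesteps any implicit convergence hypothesis on the Neumann series. The cost is that your proof depends on the specific positivity structure of $W_{2,G}$ and would not immediately transfer to an abstract kernel, whereas the paper's does.
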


\begin{proof}
	Let $f \in L^2 [-1/2, 1/2]$ such that $K_{G, \psi} f = -f$, i.e.,
		\[ 0 \ = \  f(x) + \int_{-1/2}^{1/2} m_{G, \psi} (x - y) f(y) dy. \]
	This is a Fredholm equation of the second kind, and the unique continuous solution is given by the corresponding Liouville-Neumann series, which, in this case, is the constant zero function. On the other hand, $m_{G, \psi}$ is uniformly continuous on $[-1, 1]$, so for fixed $\epsilon > 0$, there exists $\delta > 0$ witnessing the uniform continuity. Then
		\[ |f(x + h) - f(x)|\ \leq\ \int_{-1/2}^{1/2} |m_{G, \psi} (x + h - y) - m_{G, \psi}(x - y)| |f(y)| dy \ \leq\ \epsilon ||f||_{L^2} \]
	whenever $|h| < \delta$. It follows that $f$ is continuous, so by uniqueness, $f = 0$. 	
\end{proof}

\begin{remark}
	A similar argument shows that the solution $g_{G, \psi}$ to (\ref{eq:fredholm}) is continuous.
\end{remark}

%%%%%%%%%%%%%%%%%%%%%%%%%%%%%%%%%%%%%%%%%%%%%%%%%%%%%%%%%%%%%%%%%%%%%%%
%%%%%%%%%%%%%%%%%%%%%%%%%%%%%%%%%%%%%%%%%%%%%%%%%%%%%%%%%%%%%%%%%%%%%%%
%%%%%%%%%%%%%%%%%%%%%%%%%%%%%%%%%%%%%%%%%%%%%%%%%%%%%%%%%%%%%%%%%%%%%%%
\subsection{Solving a Fredholm integral equation with quadratic kernel} \label{sec:quadker}

Let $\psi$ be as in (\ref{eq:fixedtest}), which has Fourier transform
	\begin{equation}	
		\widehat \psi (x) \ = \  (\mathbb 1_{[-1/2, 1/2]} * \mathbb 1_{[-1/2, 1/2]})(x) \ = \  (1 - |x|) \mathbb 1_{[-1, 1]} (x).
	\end{equation}	
Not only is this the natural choice for the fixed test function $\psi$, it also lends to an elegant derivation of the corresponding optimal $g_{G, \psi}$, as the kernels $m_{G, \psi}$ take the form of quadratic polynomials in $|x|$ on the interval $[-1, 1]$. Namely,
\begin{align}
	m_{\SpOrthE, \psi} (x)
		&\ = \  -\frac32 + \frac83 |x| - \frac23 x^2, \\
	m_{\SpOrthO, \psi} (x)
		&\ = \  -\frac{5}{6} + \frac83 |x| - 2 x^2, \\
	m_{\Orth, \psi} (x)
		&\ = \  -\frac76 + \frac83 |x| -\frac43 2x^2, \\
	m_{\Unit, \psi} (x)
		&\ = \  -1 + 2 |x| - x^2,\\
	m_{\Symp, \psi} (x)
		&\ = \  - \frac52 + 8|x| - 6 x^2.
\end{align}
Prior experience with the analogous 1-level problem in \cite{ILS}, \cite{FreemanThesis} and \cite{FreemanMiller} suggests that $g_{G, \psi}$ takes the form of an even trigonometric polynomial. Indeed, not only does this hold in Theorem \ref{thm:optimalgpsi}, this holds in generality for Fredholm integral equations where the kernel is an even quadratic polynomial in $|x|$, as Lemma \ref{lem:nonsingular} relies only on uniform continuity of the kernel.

\begin{theorem}
	Let $a, b, c \in \R$ with $b \geq 0$. The following Fredholm integral equation with quadratic kernel,
		\begin{equation}
			1 \ = \  g(x) + \int_{-1/2}^{1/2} (a + b |x - y| + c |x - y|^2) g(y) dy, 	\label{eq:fredholmquad}
		\end{equation}
	admits the unique continuous solution
		\begin{equation}
			g(x) \ = \  \frac{6b^{3/2} (b + c) \cos(\sqrt{2b} x) - 6 \sqrt{2} b c \sin(\sqrt{b/2})}{6 \sqrt{b} (b + c)^2 \cos(\sqrt{b/2}) + \sqrt{2} (6 a b^2 + 3b^3 + 3b^2 c + b c (c - 12) - 6c^2) \sin (\sqrt{b/2})}. \label{eq:quadsol}
		\end{equation}\label{thm:quadratic}	
\end{theorem}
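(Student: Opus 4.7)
The plan is to convert the Fredholm equation into a linear ODE by differentiating twice in $x$. Set $I_0 := \int_{-1/2}^{1/2} g(y) \, dy$; splitting $\int_{-1/2}^{1/2} |x-y| g(y) \, dy$ at $y = x$ and applying the Leibniz rule shows that the $b|x-y|$ portion contributes $2b g(x)$ upon two differentiations, while the quadratic portion contributes $2 c I_0$ and the constant portion contributes $0$. Hence any continuous solution of (\ref{eq:fredholmquad}) satisfies
\[ g''(x) + 2b\, g(x) + 2c I_0 \;=\; 0, \qquad x \in (-1/2, 1/2). \]
Next I argue $g$ is even: the substitution $y \mapsto -y$ inside the integral shows that $g(-x)$ is also a solution of (\ref{eq:fredholmquad}), so by uniqueness (which follows from an argument identical to that of Lemma \ref{lem:nonsingular}, requiring only uniform continuity of the kernel) we must have $g(x) = g(-x)$. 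The general even solution of the ODE is therefore
\[ g(x) \;=\; A \cos(\sqrt{2b}\, x) + D, \qquad D \;=\; -\tfrac{c I_0}{b}, \]
so the problem reduces to finding two scalars $A$ and $I_0$.

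Two scalar constraints suffice. The first comes from integrating $g$ directly, giving $I_0 = \frac{2A}{\sqrt{2b}} \sin(\sqrt{b/2}) + D$, which combined with $D = -cI_0/b$ expresses both $D$ and $I_0$ as multiples of $A$; in particular, $D = -\frac{\sqrt{2}\, c \sin(\sqrt{b/2})}{\sqrt{b}(b+c)}\, A$. The second constraint is obtained by substituting $g(x) = A\cos(\sqrt{2b}\, x) + D$ back into (\ref{eq:fredholmquad}) and evaluating at, say, $x = 1/2$. Using evenness of $g$, one has $\int_{-1/2}^{1/2} (1/2 - y) g(y) \, dy = I_0/2$ and $\int_{-1/2}^{1/2} (1/2 - y)^2 g(y) \, dy = I_0/4 + M_2$, where
\[ M_2 \;:=\; \int_{-1/2}^{1/2} y^2 \cos(\sqrt{2b}\, y)\, dy \;=\; \frac{(b-4)\sin(\sqrt{b/2})}{2b\sqrt{2b}} + \frac{\cos(\sqrt{b/2})}{b} \]
by two integrations by parts. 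Plugging these into the equation at $x = 1/2$, together with the first constraint, produces a single linear equation for $A$.

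The main obstacle is not conceptual but algebraic: cleanly assembling the resulting expression into the compact closed form (\ref{eq:quadsol}) requires careful bookkeeping with the identities $\omega^2 = 2b$ and $\omega/2 = \sqrt{b/2}$, where $\omega = \sqrt{2b}$. After solving for $A$ and substituting back into $g(x) = A\cos(\omega x) + D$, one clears the common denominator to obtain the stated formula. Uniqueness is built into the construction: any continuous solution must be even, must satisfy the ODE, and must satisfy both scalar constraints above, so whenever the denominator in (\ref{eq:quadsol}) is nonzero the solution coincides with the claimed expression; equivalently, the Fredholm alternative applies and $-1$ fails to be an eigenvalue of the associated integral operator.
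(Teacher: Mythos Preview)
Your approach is essentially the paper's: differentiate the integral equation to obtain the linear ODE $g'' + 2bg + 2cI_0 = 0$, use evenness to reduce to $g(x) = A\cos(\sqrt{2b}\,x) + D$, and then fix the constants via one evaluation of the original equation. The only substantive differences are cosmetic: the paper differentiates a third time to strip out $I_0$ before solving, and it evaluates (\ref{eq:fredholmquad}) at $x=0$ rather than $x=1/2$; your justification of evenness via uniqueness is actually a small improvement over the paper, which simply assumes it. One slip to fix: since $g(y) = A\cos(\sqrt{2b}\,y) + D$, your second-moment term should read $\int_{-1/2}^{1/2}(1/2 - y)^2 g(y)\,dy = I_0/4 + A\,M_2 + D/12$, not $I_0/4 + M_2$.
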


Theorem \ref{thm:optimalgpsi} follows as an immediate corollary. We prove Theorem \ref{thm:quadratic} following a differential equations argument due to Freeman and Miller. Observe that the left-hand side of (\ref{eq:fredholmquad}) is constant, so derivatives of the expression on the right vanish. Assuming $g$ is smooth, we differentiate under the integral sign to obtain
\begin{align*}
		\frac{d}{dx}  \int_{-1/2}^{1/2} |x - y| g(y) dy
			&\ = \ \frac{d}{dx} \left( \int_{-1/2}^x (x - y) g(y) dy +  \int_x^{1/2} (y - x) g(y) dy \right)\\
			&\ = \  \int_{-1/2}^x g(y) dy - \int_x^{1/2} g(y) dy, \\
		\frac{d^2}{dx^2}  \int_{-1/2}^{1/2} |x - y| g(y) dy
			&\ = \  2g(x),
	\end{align*}
and
	\begin{align*}
		\frac{d}{dx} \int_{-1/2}^{1/2} (x - y)^2 g(y) dy
			&\ = \  \int_{-1/2}^{1/2} (2x - 2y) g(y) dy, \\
		\frac{d^2}{dx^2} \int_{-1/2}^{1/2} (x - y)^2 g(y) dy
			&\ = \  2 \int_{-1/2}^{1/2} g(y) dy.
	\end{align*}
We thereby obtain the corresponding system of linear homogeneous differential equations,
	\begin{align}
		1
			&\ = \  g(0) + a \int_{-1/2}^{1/2} |y| g(y) dy, \label{eq:diff1}\\
		0
			&\ = \  g'' (x) + 2b g(x) + 2c \int_{-1/2}^{1/2} g(y) dy, \label{eq:diff2} \\
		0
			&\ = \  g''' (x) + 2b g'(x).		\label{eq:diff3}	
	\end{align}
Equation (\ref{eq:diff1}) is exactly (\ref{eq:fredholmquad}) taking $x = 0$. We obtain (\ref{eq:diff2}) and (\ref{eq:diff3}) by differentiating (\ref{eq:fredholmquad}) under the integral sign twice and thrice respectively. Assuming $g$ is even, solutions to (\ref{eq:diff3}) take the form $g(x) = A \cos(\sqrt{2b} x) + C$ for some constants $A, C \in \R$. Substituting into (\ref{eq:diff2}) reduces these two degrees of freedom to one,
	\begin{align}
		0
			&\ = \  g'' (x) + 2b g(x) + 2c \int_{-1/2}^{1/2} g(y) dy \ = \  2C (b + c) + \frac{4A c}{\sqrt{2b}} \sin\left( \frac{\sqrt{2b}}{2} \right).
	\end{align}
	This shows that
	\begin{equation}
		 g(x) \ = \  A\cos(\sqrt{2b} x) - A\frac{2c}{b + c} \frac{\sin\left( \sqrt{b/2} \right)}{\sqrt{2b}}.
	\end{equation}
	Substituting the above into (\ref{eq:diff1}) allows us to solve for $A$ explicitly, which completes the derivation of (\ref{eq:quadsol}). \hfill $\Box$

\begin{remark}
    The same differential equations method can be applied for kernels which take the form of of higher order polynomials in $|x|$ on $[-1, 1]$, that is, $m(x) = p(x)$ for some degree $n$ polynomial $p$. Future approaches to the optimization problem may want to consider optimizing over fixed test functions $\psi$ which produce kernels of such form.
\end{remark}
%	If the kernel $m(x)=\sum_{i=0}^{n}d_i|x|^i$ is a polynomial in $|x|$, then the integral equation $g(x) - \int_{-1/2}^{1/2} m(x-y)g(y) dy = 1$ implies a homogeneous linear differential equation $g^{(n+1)}-2\sum_{j = 0}^{\lfloor\frac{n-1}{2}\rfloor}(2j+1)!d_{2j+1}g^{(n-(2j+1))} = 0$. For example, we might choose $\widehat{\psi} = f(x) * f(x)$ where $f(x) = (1 + ax^2)\mathbb{1}_{[-1/2,1/2]}$ and find the optimal test function for various $a\in\mathbb{R}$.

%%%%%%%%%%%%%%%%%%%%%%%%%%%%%%%%%%%%%%%%%%%%%%%%%%%%%%%%%%%%%%%%%%%%%%%%%%%%%%%%%%%%%%%%%%%%%%%%%%%%%%%%%%%%%%%%%%%%%%%%%%%%%%%%%%%%%%%%%%%%%%%%%%%%%%%%%%%%%%%%
%%%%%%%%%%%%%%%%%%%%%%%%%%%%%%%%%%%%%%%%%%%%%%%%%%%%%%%%%%%%%%%%%%%%%%%%%%%%%%%%%%%%%%%%%%%%%%%%%%%%%%%%%%%%%%%%%%%%%%%%%%%%%%%%%%%%%%%%%%%%%%%%%%%%%%%%%%%%%%%%
%%%%%%%%%%%%%%%%%%%%%%%%%%%%%%%%%%%%%%%%%%%%%%%%%%%%%%%%%%%%%%%%%%%%%%%%%%%%%%%%%%%%%%%%%%%%%%%%%%%%%%%%%%%%%%%%%%%%%%%%%%%%%%%%%%%%%%%%%%%%%%%%%%%%%%%%%%%%%%%%
\section{Iteration}

One can improve the bounds found in Table \ref{tab:bounds} by replacing $\psi$ by $\phi_{G, \psi}$ and optimizing accordingly. As $\widehat{\phi_{G, \psi}}$ takes the form of a piecewise trigonometric polynomial for each of the classical compact groups, the methods used in Section \ref{sec:quadker} are not applicable. We instead appeal to the standard approach to solving Fredholm integral equations by the method of iteration. Suppose an even continuous kernel $m: [-1, 1] \to \R$ satisfies
	\begin{equation}
		\int_{-1/2}^{1/2} \int_{-1/2}^{1/2} |m(x - y)|^2 dx dy < 1.
	\end{equation}
Define a self-adjoint compact operator $K: L^2 [-1/2, 1/2] \to L^2[-1/2, 1/2]$ by
	\begin{equation}
		(Kf)(x) \ := \  \int_{-1/2}^{1/2} m(x - y) f(y) dy.
	\end{equation}
It follows from the Cauchy-Schwarz inequality that the operator norm satisfies $||K||_{L^2 \to L^2} < 1$, that is, $K$ is a contraction mapping, since
	\begin{equation}
		||Kg||_2^2 \ = \  \int_{-1/2}^{1/2} \left( \int_{-1/2}^{1/2} m(x - y) g(y) dy \right)^2 dx \leq ||g||_2^2 \int_{-1/2}^{1/2} \int_{-1/2}^{1/2} |m(x - y)|^2 dx dy < ||g||_2^2.
	\end{equation}	
Thus by the Weierstrass $M$-test, the series
	\begin{equation}
		g(x) \ := \  \sum_{n = 0}^\infty (-1)^n K^n (1)(x) \label{eq:series}
	\end{equation}
converges absolutely and uniformly on the interval $[-1/2, 1/2]$. Moreover, it is the unique continuous solution to the Fredholm integral equation $(I + K) g = 1$. Since the series converges absolutely, we can integrate term by term to obtain the corresponding minimum value,
	\begin{equation}
		\frac{c_{G, \phi}}{\langle 1, g \rangle} \ = \  c_{G, \phi}\left( \sum_{n = 0}^\infty (-1)^n \int_{-1/2}^{1/2} K^n (1) (x) dx \right)^{-1}. \label{eq:seriesmin}
	\end{equation}
Unfortunately, this method of deriving new bounds is computationally intensive as we need to compute $n$-dimensional integrals of unwieldy expressions. Additionally, depending on the rate of convergence we may need to compute a large number of terms to obtain meaningful degrees of accuracy. For the purposes of this paper we focus on the unitary group, where these challenges can be avoided.

For brevity, denote $\phi := \phi_{\Unit, \psi}$. In this case we know the series converges, as
\begin{align}
		\int_{-1/2}^{1/2} \int_{-1/2}^{1/2} |m_{\Unit, \phi}(x - y)|^2 dx dy \ = \  \frac{2 \sin^2 (1) (128 - 110 \cos (2) - 37 \sin (2))}{3 (-8 + 6 \cos (2) - \sin (2))^2} < 1.
\end{align}
Moreover, $\widehat{\phi}$ is non-negative, so it follows that $(-1)^n K^n_{\Unit, \phi}(1)$ is non-negative. We can therefore truncate the series in (\ref{eq:seriesmin}) to obtain an upper bound, as the terms are non-negative. Summing five terms gives the bound
	\begin{equation}
		\inf_\Phi \frac{1}{\Phi(0, 0)} \int_{\R^2} \Phi(x, y) W_{2, \Unit} (x, y) dx dy\ \leq\ c_{G, \psi}\left( \sum_{n = 0}^5 (-1)^n \int_{-1/2}^{1/2} K^n (1) (x) dx \right)^{-1} \approx 0.4888,
	\end{equation}
a small improvement on our previous bound in Table \ref{tab:bounds}.

%%%%%%%%%%%%%%%%%%%%%%%%%%%%%%%%%%%%%%%%%%%%%%%%%%%%%%%%%%%%%%%%%%%%%%%%%%%%%%%%%%%%%%%%%%%%%%%%%%%%%%%%%%%%%%%%%%%%%%%%%%%%%%%%%%%%%%%%%%%%%%%%%%%%%%%%%%%%%%%%
%%%%%%%%%%%%%%%%%%%%%%%%%%%%%%%%%%%%%%%%%%%%%%%%%%%%%%%%%%%%%%%%%%%%%%%%%%%%%%%%%%%%%%%%%%%%%%%%%%%%%%%%%%%%%%%%%%%%%%%%%%%%%%%%%%%%%%%%%%%%%%%%%%%%%%%%%%%%%%%%
%%%%%%%%%%%%%%%%%%%%%%%%%%%%%%%%%%%%%%%%%%%%%%%%%%%%%%%%%%%%%%%%%%%%%%%%%%%%%%%%%%%%%%%%%%%%%%%%%%%%%%%%%%%%%%%%%%%%%%%%%%%%%%%%%%%%%%%%%%%%%%%%%%%%%%%%%%%%%%%%
\section{Future Work}

We conclude with some remarks on how to further improve estimates on the optimal value of right hand side of (\ref{eq:boundorder}) and thereby obtaining improved bounds on the average order of vanishing at the central point. Recall that the quantity of interest is
\begin{equation}
	 \frac{1}{\phi(0) \psi(0)} \int_{\R^2} \phi(x) \psi(y) W_{2, \cF} (x, y) dx dy \label{eq:further}
\end{equation}
for one variable test functions $\phi$ and $\psi$ with Fourier transforms supported in $[-1, 1]$. As hinted in the previous section, so far we have optimized over test functions $\phi$ for fixed $\psi$, so a natural approach to improving the bounds is to iterate: for each $k \geq 1$ we find optimal $\phi_{k + 1}$ for fixed $\phi_{k}$, then taking $\phi = \phi_k$ and $\psi = \phi_{k + 1}$ in (\ref{eq:further}) gives a non-increasing sequence in $k$. The hope is that this iteration converges to a global minimum. The main challenge encountered in computing the iteration is that we lack a general method of obtaining a closed form expression for the optimal $\phi_k$. From prior experience in \cite{ILS, FM} and Theorem \ref{thm:optimalgpsi}, we expect the series (\ref{eq:series}) converges to a piecewise continuous trigonometric polynomial, and likewise if we continue to iterate the optimization. Another approach to the problem may therefore be a numerical search of the space of piecewise continuous trigonometric polynomials.

We plan on generalizing these arguments to arbitrary $n$-level densities, that is, optimizing over $n$-variables test function taking the form $\Phi(x_1, \dots, x_n) = \phi_1 (x_1) \cdots \phi_n (x_n)$ for fixed $\phi_1, \dots, \phi_{n - 1}$. Again, the natural choices for the fixed test functions are
	\[ \phi_1 (x)\ =\ \cdots\ =\ \phi_{n - 1} (x)\ =\ \left( \frac{\sin (\pi x)}{\pi x} \right)^2, \]
and we suspect the differential equations method outlined in Section \ref{sec:quadker} continues to be amenable to this particular case. The appendix below describes work in progress on this step.

\appendix
%%%%%%%%%%%%%%%%%%%%%%%%%%%%%%%%%%%%%%%%%%%%%%%%%%%%%%%%%%%%%%%%%%%%%%%%%%%%%%%%%%%%%%%%%%%%%%%%%%%%%%%%%%%%%%%%%%%%%%%%%%%%%%%%%%%%%%%%%%%%%%%%%%%%%%%%%%%%%%%%
%%%%%%%%%%%%%%%%%%%%%%%%%%%%%%%%%%%%%%%%%%%%%%%%%%%%%%%%%%%%%%%%%%%%%%%%%%%%%%%%%%%%%%%%%%%%%%%%%%%%%%%%%%%%%%%%%%%%%%%%%%%%%%%%%%%%%%%%%%%%%%%%%%%%%%%%%%%%%%%%
%%%%%%%%%%%%%%%%%%%%%%%%%%%%%%%%%%%%%%%%%%%%%%%%%%%%%%%%%%%%%%%%%%%%%%%%%%%%%%%%%%%%%%%%%%%%%%%%%%%%%%%%%%%%%%%%%%%%%%%%%%%%%%%%%%%%%%%%%%%%%%%%%%%%%%%%%%%%%%%%

\section{Bounds from the Fourth Moment}\label{sec:applimiller}

\noindent By Jiahui (Stella) Li\footnote{\email{\textcolor{blue}{\href{mailto:stellali110204@gmail.com}{stellali110204@gmail.com}}}, Emma Willard School.} and Steven J. Miller\\ \

As remarked, there are several avenues for future research to obtain better estimates on the order of vanishing. One of course is to continue to try to improve the test functions, though this paper and earlier work show that simple choices are close to the optimal results. We may also study higher level densities (or equivalently centered moments). This paper is the first along this line, and concentrates on the 2-level density. We can of course continue and consider $n$-level densities and moments, and in those calculations important ingredients are good choices for fixed test functions, so that once again we can reduce the complexity of the optimization problem. Thus our results here can be fed in to higher $n$. There is a balancing act here; \emph{the larger $n$ give better bounds as the rank grow, but worse results for small ranks}\footnote{Often the analysis leads to bounds on the order of vanishing that exceed 100\%, and of course there are easier ways to get such estimates!}. Thus it is important to pursue both avenues, namely optimizing the test function and exploring larger results coming from larger $n$.

%Miller observed in \cite{Mil1} that the $n$-level densities provide better and better bounds for order of vanishing above a certain order that is determined by $n$. Specifically, we have the following theorem from \cite{Mil1}: \begin{theorem}\label{thm: prob vanish} Using $n$-level arguments, for the family of cuspidal newforms of prime level $N\to\infty$ (split or not split by sign), for any $r$ there is a $c_r$ such that probability of at least $r$ zeros at the central point is at most $c_n r^{-n}$.\end{theorem}

%By Theorem \ref{thm: prob vanish}, we expect to see notable improvements on bounds for $r$ as $n$ increases. Unfortunately, the constant $c_n$ grows with $n$: the growth of $c_n$ arise from the decrease in support of the test functions for higher $n$-level density. Thus, for small $r$ the bounds obtained from large $n$ can actually be worse, or even useless.\footnote{The bound may exceed 1 for large $n$ and small $r$, and, in the context of probability of an event (which we are in), 100\% is certainly not a valuable nor informative upper bound.}

Calculations for bounds from higher level density are computationally difficult as they include multiple $n$-dimensional determinant integrals. A closely related statistic, the $n$\textsuperscript{th} centered moment, was used instead by Hughes and Miller \cite{HM}. The $n$\textsuperscript{th} centered moment replaces the $n \times n$ determinant expansions of the $n$-level density with a one-dimensional integral through a clever change of variable that reduces a $n$-dimensional integral involving Bessel functions and $n$ test functions to a related one-dimensional integral with just one Bessel function against a new test function.

Below we report on some work in progress that uses the results of this paper; see \cite{LiM} for details. As a first step, the results of \cite{HM} are generalized to allow the test function to be a product of $n$ distinct even Schwartz test functions, which is essential in order to be able to use the results of this paper as fixed test functions for some of the choices, leaving the other test functions free to be varied. Comparing the results with the bounds obtained using the $1$- and $2$-level densities, we see \emph{\textbf{significant}} improvements for bounds on modest rank, but worse results for bounds on small rank. Thus both methods, increasing $n$ and optimizing the test functions, have their utility. Below we report on ranks where the fourth moment bounds are better.

%However, we could not compute bounds for small vanishing (the exact vanishing restriction depends on the test function we choose) using the centered moment. This is analogous to the fact that higher level densities cannot produce better bounds for small vanishing. This limitation highlights the advantage of optimizing bounds from the 2-$level$ densities, where we could actually obtain numerical data on small order vanishings that are closer to the hypothesized average order for the families in the limit ($0$ for SO(even) and $1$ for SO(odd)).

%The numerical values we obtained for the bounds are illustrated in the table below.

When calculating bounds using the $4\textsuperscript{th}$ centered moment, if we assume calculations hold for greater support, we find that the combination of a pair of the optimal 2-level test function found in this paper and a pair of the naive test function yields better results than just 4 naive test functions for higher ranks of SO(even) functions\footnote{We are optimistic that such larger support will be known soon, see \cite{C--}.}  The calculated bounds are illustrated in the tables below. Table \ref{ref:tablenaivethispaper} shows an application where the test functions found in this paper yield better results than the naive functions, while Tables \ref{ref:tableeven} and \ref{ref:tableodd} show the significant improvement in upper bounds for modest ranks.

\begin{table}[h]
\footnotesize
    \centering
    \begin{tabular}{|l|l|l|l|l|}
    \hline
        Order vanishing & $1$-level & $2$-level & $4$\textsuperscript{th} centered moment & $4$\textsuperscript{th} centered moment\textsuperscript{*} \\ \hline
        100 & 0.0086454 & 3.86172 $\cdot$ $10^{-5}$ & 4.06379 $\cdot$ $10^{-9}$ &  3.68655 $\cdot$ $10^{-9}$ \\ \hline
        200 & 0.0043227 & 9.55677 $\cdot$ $10^{-6}$ & 2.46396 $\cdot$ $10^{-10}$ & 2.18955 $\cdot$ $10^{-10}$ \\ \hline
%        300 & 0.0028818 & 4.2332 $\cdot$ $10^{-6}$ & 4.81836 $\cdot$ $10^{-11}$ & 4.25276 $\cdot$ $10^{-11}$ \\ \hline
        400 & 0.0021613 & 2.37719 $\cdot$ $10^{-6}$ & 1.51692 $\cdot$ $10^{-11}$ & 1.33434 $\cdot$ $10^{-11}$ \\ \hline
%        500 & 0.00172908 & 1.51987 $\cdot$ $10^{-6}$ & 6.19464* $10^{-12}$ & 5.43804 $\cdot$ $10^{-12}$ \\ \hline
%        600 & 0.0014409 & 1.05476 $\cdot$ $10^{-6}$ & 2.9814* $10^{-12}$ & 2.61374 $\cdot$ $10^{-12}$ \\ \hline
%        700 & 0.00123506 & 7.74556*$10^{-7}$ & 1.60698 $\cdot$ $10^{-12}$ & 1.40746 $\cdot$ $10^{-12}$ \\ \hline
        800 & 0.0010806 & 5.92807 $\cdot$ $10^{-7}$ & 9.40972 $\cdot$ $10^{-13}$ & 8.23550  $\cdot$ $10^{-13}$ \\ \hline
%        900 & 0.0009606 & 4.68261 $\cdot$ $10^{-7}$ & 5.86954 $\cdot$ $10^{-13}$ & 5.13422 $\cdot$ $10^{-13}$ \\ \hline
%        1000 & 0.00086454 & 3.79207 $\cdot$ $10^{-7}$ & 3.84844 $\cdot$ $10^{-13}$ & 3.36481 $\cdot$ $10^{-13}$ \\ \hline
    \end{tabular}
        \caption{\label{ref:tablenaivethispaper}\small
        Upper bounds for vanishing to order at least $r$ for SO(even) from various approaches. \\
        For the $1$-level, we used the optimal $1$-level bound from \cite{ILS}, the support is $(-2,2)$.\\
        For the $2$-level, we used the results of this paper, the support is $(-1,1)$.\\
        For the $4$\textsuperscript{th} moment, we used $4$ copies of the naive test function, the support is $(-1/2,1/2)$. \\
        For the $4$\textsuperscript{th} moment\textsuperscript{*}, we used $2$ copies of the results of this paper and $2$ copies of the naive test function, the support is $(-1/2,1/2)$.}
\end{table}

\begin{table}[h]
    \centering
    \begin{tabular}{|l|l|l|l|}
    \hline
        Order vanish & $1$-level & $2$-level & $4$\textsuperscript{th} centered moment\\ \hline
        6  & 0.144090 & 0.0157687 & 0.00853841 \\ \hline
        8  & 0.108067 & 0.0078843 & 0.000813368 \\ \hline
        10 & 0.086454 & 0.0047306 & 0.000186846 \\ \hline
%        12 & 0.072045 & 0.0031537 & 6.38942 $\cdot$ $10^{-5}$ \\ \hline
%        14 & 0.061752 & 0.0022526 & 2.74397 $\cdot$ $10^{-5}$ \\ \hline
%        16 & 0.054033 & 0.0016895 & 1.36615 $\cdot$ $10^{-5}$ \\ \hline
%        18 & 0.048030 & 0.0013140 & 7.5451 $\cdot$ $10^{-6}$ \\ \hline
        20 & 0.043227 & 0.0010512 & 4.49988 $\cdot$ $10^{-6}$ \\ \hline
    \end{tabular}
\caption{\label{ref:tableeven} Upper bounds for vanishing to order at least $r$ for SO(even). \\
The $1$-level uses the optimal $1$-level bound from \cite{ILS}, the support is $(-2,2)$.\\
The $2$-level uses the results of this paper, the support is $(-1,1)$.\\
For the $4$\textsuperscript{th} centered moment\textsuperscript{*} column, we use $4$ copies of the naive test functions with support $(-1/3,1/3)$.}
\end{table}

\begin{table}[h]
    \centering
    \begin{tabular}{|l|l|l|l|}
    \hline
        Order vanishing & $1$-level & $2$-level & $4$\textsuperscript{th} centered moment \\ \hline
        5  & 0.222908 & 0.0674429 & 0.06580440 \\ \hline
        7  & 0.159220 & 0.0299746 & 0.00221997 \\ \hline
        9  & 0.123838 & 0.0168607 & 0.00036405 \\ \hline
 %       11 & 0.101322 & 0.0107909 & 0.00010528 \\ \hline
 %       13 & 0.085733 & 0.0074936 & 4.09002 $\cdot$ $10^{-5}$ \\ \hline
 %       15 & 0.074302 & 0.0055055 & 1.90471 $\cdot$ $10^{-5}$ \\ \hline
 %       17 & 0.065561 & 0.0042151 & 1.00296 $\cdot$ $10^{-5}$ \\ \hline
        19 & 0.058660 & 0.0033305 & 5.77156 $\cdot$ $10^{-6}$ \\ \hline
    \end{tabular}
        \caption{\label{ref:tableodd}
    Upper bounds of vanishing to order at least $r$ for SO(odd). \\
    The $1$-level uses the optimal $1$-level bound from \cite{ILS}, the support is $(-2,2)$.\\
    The $2$-level uses the results of this paper, the support is $(-1,1)$.\\
    For the $4$\textsuperscript{th} centered moment column, we use $4$ copies of the naive test functions with support $(-1/3,1/3)$.}
\end{table}

\newpage

%%%%%%%%%%%%%%%%%%%%%%%%%%%%%%%%%%%%%%%%%%%%%%%%%%%%%%%%%%%%%%%%%%%%%%%%%%%%%%%%%%%%%%%%%%%%%%%%%%%%%%%%%%%%%%%%%%%%%%%%%%%%%%%%%%%%%%%%%%%%%%%%%%%%%%%%%%%%%%%
%%%%%%%%%%%%%%%%%%%%%%%%%%%%%%%%%%%%%%%%%%%%%%%%%%%%%%%%%%%%%%%%%%%%%%%%%%%%%%%%%%%%%%%%%%%%%%%%%%%%%%%%%%%%%%%%%%%%%%%%%%%%%%%%%%%%%%%%%%%%%%%%%%%%%%%%%%%%%%%
%%%%%%%%%%%%%%%%%%%%%%%%%%%%%%%%%%%%%%%%%%%%%%%%%%%%%%%%%%%%%%%%%%%%%%%%%%%%%%%%%%%%%%%%%%%%%%%%%%%%%%%%%%%%%%%%%%%%%%%%%%%%%%%%%%%%%%%%%%%%%%%%%%%%%%%%%%%%%%%
\bibliography{biblio}

\end{document}